\documentclass{amsart}
%%%%%%%%%%%%%%%%%%%%%%%%%%%%%%%%%%%%%%%%%%%%%%%%%%%%%%%%%%%%%%%%%%%%%%%%%%%%%%%%%%%%%%%%%%%%%%%%%%%%%%%%%%%%%%%%%%%%%%%%%%%%%%%%%%%%%%%%%%%%%%%%%%%%%%%%%%%%%%%%%%%%%%%%%%%%%%%%%%%%%%%%%%%%%%%%%%%%%%%%%%%%%%%%%%%%%%%%%%%%%%%%%%%%%%%%%%%%%%%%%%%%%%%%%%%%
\usepackage{amssymb}
\usepackage{amsmath}
\usepackage{amsfonts}

\setcounter{MaxMatrixCols}{10}
%TCIDATA{OutputFilter=LATEX.DLL}
%TCIDATA{Version=5.50.0.2953}
%TCIDATA{<META NAME="SaveForMode" CONTENT="1">}
%TCIDATA{BibliographyScheme=Manual}
%TCIDATA{Created=Monday, January 08, 2007 11:48:29}
%TCIDATA{LastRevised=Friday, April 30, 2010 12:24:03}
%TCIDATA{<META NAME="GraphicsSave" CONTENT="32">}
%TCIDATA{<META NAME="DocumentShell" CONTENT="Articles\SW\AMS Journal Article">}
%TCIDATA{Language=American English}
%TCIDATA{CSTFile=amsartci.cst}

\newtheorem{theorem}{Theorem}
\theoremstyle{plain}

\newtheorem{definition}{Definition}

\newtheorem{remark}{Remark}

\numberwithin{equation}{section}
\input{tcilatex}

\begin{document}
\title[Inverse Coefficient Problem for Heat Equation]{Inverse Problem of
Finding the Time-dependent Coefficient of Heat Equation from Integral
Overdetermination Condition Data}
\author{Mansur I. Ismailov$^{\ast )}$ Fatma Kanca$^{\ast \ast )}$}
\address{$^{\ast )}$Department of Mathematics, Gebze Institute of Technology,%
\\
Gebze-Kocaeli 41400, Turkey, Tel/Fax: +902626051641/+902626051365 \\
$^{\ast \ast )}$Department of Mathematics, Kocaeli University, Kocaeli
41380, Turkey }
\email{mismailov@gyte.edu.tr; fzoroglu@kocaeli.edu.tr }
\date{March, 29, 2010}
\subjclass[2000]{Primary 35R30 ; Secondary 35K20}
\keywords{{heat equation; inverse problem; nonlocal boundary conditions;
integral overdetermination condition; time-dependent coefficient}}
\dedicatory{}

\begin{abstract}
In this paper we consider the problem of simultaneously determining the
time-dependent thermal diffusivity and the temperature distribution in
one-dimensional heat equation in the case of nonlocal boundary and integral
overdetermination conditions. We establish conditions for the existence and
uniqueness of a classical solution of the problem under considerations. We
present some results on the numerical solution with an example.
\end{abstract}

\maketitle

\section*{Introduction}

\bigskip

\qquad Suppose that one need to determine the temperature distribution $%
u(x,t)$ as well as thermal coefficient $a(t)$ simultaneously satisfy the
equation

\begin{equation}
u_{t}=a(t)u_{xx}+F(x,t),\text{ }0<x<1,\text{ }0<t\leq T,
\end{equation}%
with the initial condition

\begin{equation}
u(x,0)=\varphi (x),\text{ }0\leq x\leq 1,
\end{equation}%
the boundary conditions

\begin{equation}
u(0,t)=u(1,t),\text{ }u_{x}(1,t)=0,\text{ }0\leq t\leq T,
\end{equation}%
and the overdetermination condition

\bigskip 
\begin{equation}
\int\limits_{0}^{1}u(x,t)dx=E(t),\text{ }0\leq t\leq T.
\end{equation}%
The problem of finding the pair $\left\{ a(t),\text{ }u(x,t)\right\} \ $in
(0.1)-(0.4) will be called an inverse problem.

Denote the domain $Q_{T}$\bigskip\ by

\begin{equation*}
Q_{T}=\left\{ \left( x,t\right) :\text{ }0<x<1,\text{ }0<t\leq T\right\} .
\end{equation*}

\begin{definition}
The pair $\left\{ a(t),\text{ }u(x,t)\right\} \ $from the class\ $C\left[ 0,T%
\right] \times C^{2,1}\left( Q_{T}\right) \cap C^{1,0}\left( \overline{Q}%
_{T}\right) $\ for which conditions (0.1)-(0.4) are satisfied and $a(t)>0\ $%
on the interval $\left[ 0,T\right] ,\ $is called the classical solution of
the inverse problem (0.1)-(0.4).
\end{definition}

The parameter identification in a parabolic differential equation from the
data of integral overdetermination condition plays an important role in
engineering and physics. ($\left[ 1,2,3,4,5\right] $)

Various statements of inverse problems on determination of thermal
coefficient in one-dimensional heat equation were studied in [4,5,6]. It is
important to note that in the papers [4,5] the time dependent thermal
coefficient is determined by nonlocal overdetermination condition's data.
Besides, in [1,4] the coefficients of the heat equations are determined in
the case of nonlocal boundary conditions.

In the present work, the existence and uniqueness of the classical solution
of the problem (0.1)-(0.4) is reduced to fixed point principles by applying
Fourier method. The boundary conditions (0.3) admit the expantions by the
system of eigenfunctions and associated functions corresponding to the
spectral problem.

The paper organized as follows:

In Chapter 1, the auxiliary spectral problem which can be obtained by
applying Fourier method to the problem (0.1)-(0.3) is studied. In Chapter 2,
the existence of the solution of inverse problem in $Q_{T},$ and the
uniqueness of the solution of the inverse problem in $Q_{T_{0}}$ $%
(0<T_{0}\leq T)$ are shown. Then in Chapter 3, the continuous dependence
upon the solution of the inverse problem is shown. Finally, in Chapter 4,
the numerical solution for the inverse problem is presented with an example.

\section{The auxiliary spectral problem}

\qquad Consider the spectral problem

\begin{equation*}
X^{\prime \prime }(x)+\lambda X(x)=0,\text{ }0\leq x\leq 1,
\end{equation*}

\begin{equation}
X(0)=X(1),\text{ }X^{\prime }(1)=0.
\end{equation}

This problem is wellknown in [7], as auxiliary spectral problem in solving a
nonlocal boundary value problem for heat equation by Fourier method.

It is clear to show that, the problem (1.1) has eigenvalues

\begin{equation*}
\lambda _{k}=(2\pi k)^{2},k=0,1,2,...
\end{equation*}%
and eigenfunctions

\begin{equation}
\overline{X}_{0}(x)=2,\text{ }\overline{X}_{k}(x)=4\cos 2\pi kx,\text{ }%
k=1,2,...
\end{equation}%
and the system of functions $\overline{X}_{k}(x),$ $k=0,1,2,...$ is not
basis in $L_{2}[0,1]$. Complete the system $\overline{X}_{k}(x),$ $%
k=0,1,2,...$ with the associated functions

\begin{equation}
\overline{\overline{X}}_{k}(x)=4(1-x)\sin 2\pi kx,k=1,2,...
\end{equation}%
of the problem (1.1). Denote the systems of functions (1.2) and (1.3) as
follows:

\begin{equation}
X_{0}(x)=2,\text{ }X_{2k-1}(x)=4\cos 2\pi kx,\text{ }X_{2k}(x)=4(1-x)\sin
2\pi kx,\text{ }k=1,2,...\text{ }.
\end{equation}%
The system of functions $X_{k}(x),$ $k=0,1,2,...$ is basis in $L_{2}[0,1]$.
([8])

The adjoint problem of (1.1) has the form

\begin{equation*}
Y^{\prime \prime }(x)+\lambda Y(x)=0,\text{ }0\leq x\leq 1,
\end{equation*}

\begin{equation}
Y(0)=0,\text{ }Y^{\prime }(0)=Y^{\prime }(1).
\end{equation}%
Analogously to the system (1.4), the system of eigenfunctions and associated
functions of the problem (1.5) is denoted by

\begin{equation}
Y_{0}(x)=x,\text{ }Y_{2k-1}(x)=x\cos 2\pi kx,\text{ }Y_{2k}(x)=\sin 2\pi kx,%
\text{ }k=1,2,....
\end{equation}

\bigskip It is easy to calculate that the systems (1.4) and (1.6) form a
biorthonormal system on interval $\left[ 0,1\right] ,$ i.e.

\begin{equation*}
\left( X_{i},Y_{j}\right) =\int_{0}^{1}X_{i}(x)Y_{j}(x)dx=\delta
_{ij}=\left\{ 
\begin{array}{c}
0,i\neq j \\ 
1,i=j%
\end{array}%
\right. .
\end{equation*}

\section{Existence and Uniqueness of the solution of the inverse problem}

\bigskip \qquad We have the following assumptions on the data of the problem
(0.1)-(0.4).

\begin{enumerate}
\item[(A$_{1}$)] $E(t)\in C^{1}\left[ 0,T\right] ,$ $E^{\prime }(t)<0,$ $%
\forall t\in \left[ 0,T\right] ;$

\item[(A$_{2}$)] $\varphi (x)\in C^{4}\left[ 0,1\right] ;$

\begin{enumerate}
\item[(1)] $\varphi (0)=\varphi (1),$ $\varphi ^{\prime }(1)=0,$ $\varphi
^{\prime \prime }(0)=\varphi ^{\prime \prime }(1),$ $\int\limits_{0}^{1}%
\varphi (x)dx=E(0);$

\item[(2)] $\varphi _{2k}\geq 0,$ $k=1,2,...;$
\end{enumerate}

\item[(A$_{3}$)] $F(x,t)\in C\left( \overline{Q}_{T}\right) ;$ $F(x,t)\in
C^{4}\left[ 0,1\right] $ for arbitrary fixed $t\in \left[ 0,T\right] ;$

\begin{enumerate}
\item[(1)] $F(0,t)=F(1,t),$ $F_{x}(1,t)=0,$ $F_{xx}(0,t)=F_{xx}(1,t);$

\item[(2)] $F_{2k}(t)\geq 0,$ $k=0,1,2,....,\int\limits_{0}^{T}E^{\prime
}(t)dt+\sum\limits_{k=1}^{\infty }\frac{2}{\pi k}\varphi
_{2k}-2\int\limits_{0}^{T}F_{0}(t)dt>0,$
\end{enumerate}
\end{enumerate}

where $\varphi _{k}=\int\limits_{0}^{1}\varphi (x)Y_{k}(x)dx,$ $%
F_{k}(t)=\int\limits_{0}^{1}F(x,t)Y_{k}(x)dx,$ $k=0,1,2,...$ .

\bigskip

\begin{remark}
There are functions $\varphi ,$ $E$ and $F$\ satisfying the assumptions $%
\left( A_{1}\right) -\left( A_{3}\right) .$ For example

$\varphi (x)=(1-x)\sin 2\pi x,$

$E(t)=\frac{1}{2\pi }\exp (-t),$

$F(x,t)=(1-x)\sin 2\pi x\exp (3t).$
\end{remark}

\bigskip

The main result is presented as follows.

\begin{theorem}
Let the assumptions $\left( A_{1}\right) -\left( A_{3}\right) $ be
satisfied. Then the following statements are true:

\begin{enumerate}
\item[(1)] The inverse problem (0.1)-(0.4) has a solution in $Q_{T}$;

\item[(2)] The solution of inverse problem (0.1)-(0.4) is unique in $%
Q_{T_{0}},$ where the number $T_{0}$ $(0<T_{0}<T)$ is determined by the data
of the problem.
\end{enumerate}
\end{theorem}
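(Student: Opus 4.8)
The plan is to convert the inverse problem into a coupled nonlinear system of integral equations via the Fourier method, and then apply a fixed-point argument.

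First I would expand the unknown $u(x,t)$ in the biorthonormal system $\{X_k\}$ from Chapter 1, writing $u(x,t)=\sum_{k\ge 0}u_k(t)X_k(x)$ with $u_k(t)=(u(\cdot,t),Y_k)$. Substituting into (0.1)–(0.3) and using the spectral relations, the coefficients $u_k(t)$ satisfy linear ODEs whose coefficients involve $a(t)$; solving them with the initial data $u_k(0)=\varphi_k$ yields explicit Duhamel-type formulas. Because the system $\{X_k\}$ contains associated functions, the even-index coefficients couple to odd-index ones, so the formulas for $u_{2k}(t)$ and $u_{2k-1}(t)$ will involve the quantity $\int_0^t a(s)\,ds$ and nested time-integrals; I would record these carefully, since the sign condition $\varphi_{2k}\ge 0$, $F_{2k}(t)\ge 0$ in (A$_2$)(2), (A$_3$)(2) is exactly what keeps the relevant coefficient combinations nonnegative. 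Next I would differentiate the overdetermination condition (0.4): since $\int_0^1 u(x,t)\,dx$ can be read off from the Fourier coefficients, equating it with $E(t)$ and differentiating gives an expression for $a(t)$ as a ratio, with $E'(t)$ (which is $<0$ by (A$_1$)) controlling the denominator. This produces the operator equation $a=\Phi(a)$ on $C[0,T_0]$.

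Then I would set up a closed ball in $C[0,T_0]$ (or $C[0,T]$ for existence) on which $\Phi$ maps into itself: this requires the series defining $u$, $u_{xx}$, and $\int u\,dx$ to converge, which is where the smoothness hypotheses (A$_2$) ($\varphi\in C^4$ with the compatibility conditions $\varphi(0)=\varphi(1)$, $\varphi'(1)=0$, $\varphi''(0)=\varphi''(1)$) and (A$_3$) (same for $F$) enter — integration by parts four times gives $O(k^{-2})$-type decay of $\varphi_k$, $F_k(t)$, enough for uniform convergence of the twice-differentiated series. For the self-mapping step one needs the denominator of $\Phi(a)$ to stay bounded away from zero; the strict inequality in (A$_3$)(2) together with $E'(t)<0$ gives this on all of $[0,T]$ for the existence part. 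For uniqueness, I would estimate $\|\Phi(a_1)-\Phi(a_2)\|_{C[0,T_0]}\le q\,\|a_1-a_2\|_{C[0,T_0]}$ with $q<1$ by choosing $T_0$ small, using that the difference of the two $u$'s depends Lipschitz-continuously on the difference of the two $a$'s (again via the Duhamel formulas and the decay of coefficients), and then invoke the Banach fixed point theorem on $Q_{T_0}$. Finally I would check that the fixed point $a$, together with its associated $u$, actually lies in the regularity class $C[0,T]\times (C^{2,1}(Q_T)\cap C^{1,0}(\overline Q_T))$ and satisfies $a(t)>0$, completing the verification that it is a classical solution in the sense of the Definition.

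**The main obstacle** I expect is the self-mapping estimate for the existence part: one must show that for the specific ratio defining $\Phi(a)$, the numerator series converges and the denominator is uniformly positive \emph{independently of} the particular $a$ in the ball, and this is precisely where the somewhat technical sign conditions (A$_2$)(2) and (A$_3$)(2) and the integral inequality in (A$_3$)(2) do their work; getting the bookkeeping of the coupled even/odd coefficient formulas right — so that the nonnegativity is genuinely preserved under the Duhamel integration — is the delicate step. The contraction estimate for uniqueness is then comparatively routine once the Lipschitz dependence of the coefficient formulas on $\int_0^t a(s)\,ds$ is in hand, with $T_0$ absorbing all the constants.
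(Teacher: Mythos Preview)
Your overall architecture matches the paper's proof closely: expand $u$ in the system $\{X_k\}$, solve the resulting ODEs for the coefficients via Duhamel formulas, differentiate the overdetermination condition (0.4) to obtain a nonlinear operator equation $a = P[a]$, use the sign conditions in (A$_2$)(2), (A$_3$)(2) to bound the denominator away from zero, and use the $C^4$ regularity for convergence of the differentiated series. The uniqueness argument (contraction on $[0,T_0]$ with $T_0$ absorbing the constants) is exactly what the paper does.

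There is, however, a genuine gap in your existence argument for part (1). You establish that $\Phi$ maps a closed set in $C[0,T]$ into itself, but self-mapping alone does not produce a fixed point; you never say which fixed-point theorem you are invoking on $[0,T]$. If you intend Banach, your own contraction estimate only holds on $[0,T_0]$ with $T_0$ small, so you would obtain existence only on $Q_{T_0}$, not on all of $Q_T$ as the theorem claims. The paper closes this gap by proving that $P$ is \emph{compact} on the invariant set $M$ --- via an Arzel\`a--Ascoli/equicontinuity argument for $P[a](t_1)-P[a](t_2)$ uniform in $a\in M$ --- and then applying \emph{Schauder's} fixed-point theorem to get existence on the full interval $[0,T]$. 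You should add this compactness step explicitly.

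One minor correction: in the operator equation the term $-E'(t)$ sits in the \emph{numerator} of $P[a]$ (contributing positivity since $E'<0$), while the \emph{denominator} is the series $\sum_k 8\pi k\bigl(\varphi_{2k}e^{-(2\pi k)^2\int_0^t a}+\int_0^t F_{2k}e^{-(2\pi k)^2\int_\tau^t a}d\tau\bigr)$, whose lower bound comes from the sign conditions on $\varphi_{2k}$, $F_{2k}$ and the integral inequality in (A$_3$)(2). Your description reversed these roles.
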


\begin{proof}
Any solution of the equation (0.1) can be given by

\begin{equation}
u(x,t)=\sum\limits_{k=1}^{\infty }u_{k}(t)X_{k}(x),
\end{equation}%
where the functions $u_{k}(t),$ $k=0,1,2,...$ satisfy the following system
of equations:

\begin{equation*}
u_{0}^{\prime }(t)=F_{0}(t),
\end{equation*}

\begin{equation*}
u_{2k}^{\prime }(t)+\left( 2\pi k\right) ^{2}a(t)u_{2k}(t)=F_{2k}(t),
\end{equation*}

\begin{equation*}
u_{2k-1}^{\prime }(t)+\left( 2\pi k\right) ^{2}a(t)u_{2k-1}(t)+4\pi
ku_{2k}(t)=F_{2k-1}(t),\text{ }k=1,2,...\text{ }.
\end{equation*}

Substuting the solution of this system of equations and initial condition
(0.2) in (2.1), we obtain the solution of the problem (0.1)-(0.3) in the
following form

\begin{eqnarray}
u(x,t) &=&\left[ \varphi _{0}+\int\limits_{0}^{t}F_{0}(\tau )d\tau \right]
X_{0}(x)  \notag \\
&&+\sum\limits_{k=1}^{\infty }\left[ \varphi _{2k}e^{-(2\pi
k)^{2}\int\limits_{0}^{t}a(s)ds}+\int\limits_{0}^{t}F_{2k}(\tau )e^{-(2\pi
k)^{2}\int\limits_{\tau }^{t}a(s)ds}d\tau \right] X_{2k}(x) \\
&&+\sum\limits_{k=1}^{\infty }\left[ \left( \varphi _{2k-1}-4\pi k\varphi
_{2k}t\right) e^{-(2\pi k)^{2}\int\limits_{0}^{t}a(s)ds}\right] X_{2k-1}(x) 
\notag \\
&&+\sum\limits_{k=1}^{\infty }\left[ \int\limits_{0}^{t}\left( F_{2k-1}(\tau
)-4\pi kF_{2k}(\tau )(t-\tau )\right) e^{-(2\pi k)^{2}\int\limits_{\tau
}^{t}a(s)ds}d\tau \right] X_{2k-1}(x).  \notag
\end{eqnarray}

\qquad \qquad \qquad \qquad \qquad \qquad \qquad \qquad \qquad \qquad \qquad
\qquad \qquad \qquad \qquad \qquad \qquad \qquad \qquad \qquad \qquad \qquad
\qquad \qquad \qquad \qquad \qquad \qquad \qquad \qquad \qquad

Under the conditions $(1)$ of $\left( A_{2}\right) $ and $(1)$ of $\left(
A_{3}\right) $ the series (2.2) and $\sum\limits_{k=1}^{\infty }\frac{%
\partial }{\partial x}$ are uniformly convergent in $\overline{Q}_{T}$ since
their majorizing sums are absolutely convergent. Therefore their sums $%
u(x,t) $ and $u_{x}(x,t)$ are continuous in $\overline{Q}_{T}$. In addition,
the series $\sum\limits_{k=1}^{\infty }\frac{\partial }{\partial t}$ and $%
\sum\limits_{k=1}^{\infty }\frac{\partial ^{2}}{\partial x^{2}}$ are
uniformly convergent in $Q_{T}$. Thus, we have $u(x,t)\in C^{2,1}\left(
Q_{T}\right) \cap C^{1,0}\left( \overline{Q}_{T}\right) $. In addition, $%
u_{t}(x,t)$ is continuous in $\overline{Q}_{T}.$ Differentiating (0.4) under
the condition $\left( A_{1}\right) ,$ we obtain

\begin{equation}
\int\limits_{0}^{1}u_{t}(x,t)dx=E^{\prime }(t),\text{ }0\leq t\leq T,
\end{equation}%
and this yields

\begin{equation}
a(t)=P\left[ a(t)\right] ,
\end{equation}%
where%
\begin{equation}
P\left[ a(t)\right] =\frac{2F_{0}(t)+\sum\limits_{k=1}^{\infty }\frac{2}{\pi
k}F_{2k}(t)-E^{\prime }(t)}{\sum\limits_{k=1}^{\infty }8\pi k\left( \varphi
_{2k}e^{-(2\pi
k)^{2}\int\limits_{0}^{t}a(s)ds}+\int\limits_{0}^{t}F_{2k}(\tau )e^{-(2\pi
k)^{2}\int\limits_{\tau }^{t}a(s)ds}d\tau \right) }.
\end{equation}%
Denote 
\begin{equation*}
C_{0}=2\underset{t\in \left[ 0,T\right] }{\min }F_{0}(t)+\underset{t\in %
\left[ 0,T\right] }{\min }\left( \sum\limits_{k=1}^{\infty }\frac{2}{\pi k}%
F_{2k}(t)\right) -\underset{t\in \left[ 0,T\right] }{\max }E^{\prime }(t),
\end{equation*}

\begin{equation*}
C_{1}=2\underset{t\in \left[ 0,T\right] }{\max }F_{0}(t)+\underset{t\in %
\left[ 0,T\right] }{\max }\left( \sum\limits_{k=1}^{\infty }\frac{2}{\pi k}%
F_{2k}(t)\right) -\underset{t\in \left[ 0,T\right] }{\min }E^{\prime }(t),
\end{equation*}

\begin{equation*}
C_{2}=\int\limits_{0}^{T}E^{\prime }(t)dt+\sum\limits_{k=1}^{\infty }\frac{2%
}{\pi k}\varphi _{2k}-2\int\limits_{0}^{T}F_{0}(t)dt,
\end{equation*}

\begin{equation*}
C_{3}=\sum\limits_{k=1}^{\infty }8\pi k\left( \varphi
_{2k}+\int\limits_{0}^{T}F_{2k}(\tau )d\tau \right) .
\end{equation*}
It is easy to verify that $C_{k}>0,$ $k=1,2,3,4$ and $C_{2}\leq C_{3},$ such
that

\begin{equation*}
\int\limits_{0}^{T}E^{\prime }(t)dt+\sum\limits_{k=1}^{\infty }\frac{2}{\pi k%
}\varphi _{2k}-2\int\limits_{0}^{T}F_{0}(t)dt\leq \sum\limits_{k=1}^{\infty }%
\frac{2}{\pi k}\left( \varphi _{2k}e^{-(2\pi
k)^{2}\int\limits_{0}^{T}a(s)ds}+\int\limits_{0}^{T}F_{2k}(\tau )e^{-(2\pi
k)^{2}\int\limits_{\tau }^{T}a(s)ds}d\tau \right) .
\end{equation*}
Using the representation (2.4), the following estimate is true: 
\begin{equation*}
0<\frac{C_{0}}{C_{3}}\leq a(t)\leq \frac{C_{1}}{C_{2}}.
\end{equation*}%
Introduce the set $M$ as: 
\begin{equation*}
M=\left\{ a(t)\in C\left[ 0,T\right] :\frac{C_{0}}{C_{3}}\leq a(t)\leq \frac{%
C_{1}}{C_{2}}\right\} .
\end{equation*}%
It is easy to see that 
\begin{equation*}
P:M\rightarrow M.
\end{equation*}%
Show that the operator P is compact. Let $M_{1}\subset M$ be an arbitrary
bounded set. Since $P(M_{1})\subset M$, then $P(M_{1})$ is uniformly
bounded. Then, we have for $a(t)\in M_{1}$ and $t_{1},$ $t_{2}\in \left[ 0,T%
\right] ,$

\begin{equation}
\left\vert P\left[ a(t_{1})\right] -P\left[ a(t_{2})\right] \right\vert \leq 
\frac{\left\vert K(t_{1})-K(t_{2})\right\vert }{N(t_{2})}+\frac{\left\vert
K(t_{1})\left( N(t_{1})-N(t_{2})\right) \right\vert }{N(t_{1})N(t_{2})},
\end{equation}%
where 
\begin{equation*}
K(t)=2F_{0}(t)+\sum\limits_{k=1}^{\infty }\frac{2}{\pi k}F_{2k}(t)-E^{\prime
}(t),
\end{equation*}

\begin{equation*}
N(t)=\sum\limits_{k=1}^{\infty }8\pi k\left( \varphi _{2k}e^{-(2\pi
k)^{2}\int\limits_{0}^{t}a(s)ds}+\int\limits_{0}^{t}F_{2k}(\tau )e^{-(2\pi
k)^{2}\int\limits_{\tau }^{t}a(s)ds}d\tau \right) .
\end{equation*}%
Using the estimate 
\begin{equation*}
\left\vert e^{-(2\pi k)^{2}\int\limits_{0}^{t_{1}}a(s)ds}-e^{-(2\pi
k)^{2}\int\limits_{0}^{t_{2}}a(s)ds}\right\vert \leq (2\pi k)^{2}\left\vert
t_{1}-t_{2}\right\vert \underset{\left[ 0,T\right] }{\max }a(t),
\end{equation*}%
we obtain 
\begin{equation}
\left\vert N(t_{1})-N(t_{2})\right\vert \leq \left[ \left(
C_{4}+C_{5}\right) \frac{C_{1}}{C_{2}}+C_{6}\right] \left\vert
t_{1}-t_{2}\right\vert ,
\end{equation}%
where

\begin{equation*}
C_{4}=\sum\limits_{k=1}^{\infty }4\left( 2\pi k\right) ^{3}\varphi _{2k},%
\text{ }C_{5}=\int\limits_{0}^{T}\sum\limits_{k=1}^{\infty }4\left( 2\pi
k\right) ^{3}F_{2k}(\tau )d\tau ,\text{ }C_{6}=\underset{t\in \left[ 0,T%
\right] }{\max }\left( \sum\limits_{k=1}^{\infty }8\pi kF_{2k}(t)\right) .
\end{equation*}%
To this end, take an arbitrary $\varepsilon >0.$

Since $K(t)$ is continuous in $\left[ 0,T\right] ,$ then $\exists \delta
_{1}=\delta _{1}(\varepsilon ),$ $\forall t_{1},$ $t_{2}\in \left[ 0,T\right]
$ $\left( \left\vert t_{1}-t_{2}\right\vert <\delta _{1}\right) :$

\begin{equation}
\left\vert K(t_{1})-K(t_{2})\right\vert <\frac{C_{2}\varepsilon }{2}.
\end{equation}
Let 
\begin{equation*}
\delta =\min \left\{ \delta _{1}(\varepsilon ),\text{ }\frac{C_{2}^{3}}{%
2\left( \left( C_{4}+C_{5}\right) C_{1}+C_{2}C_{6}\right) C_{1}}\varepsilon
\right\} .
\end{equation*}%
From (2.7) for $\left\vert t_{1}-t_{2}\right\vert <\delta ,$ we obtain

\begin{equation}
\left\vert N(t_{1})-N(t_{2})\right\vert \leq \frac{C_{2}^{2}}{2C_{1}}%
\varepsilon .
\end{equation}%
Substituting (2.8) and (2.9) in (2.6) we get

\begin{equation*}
\left\vert P\left[ a(t_{1})\right] -P\left[ a(t_{2})\right] \right\vert
<\epsilon .
\end{equation*}

So, the set $P(M_{1})$ is equicontinuous. Then $P(M_{1})$ is a compact set
and the operator $P$ is compact and maps the set $M$ onto itself. Employing
the Schauder's Fixed Point Theorem, we have a solution $a(t)\in C\left[ 0,T%
\right] $ of the equation (2.4).

Now let us show that there exists $Q_{T_{0}}$ $(0<T_{0}\leq T)$ and the
solution $(a,u)$ of the problem (0.1)-(0.4) is unique in $Q_{T_{0}}.$
Suppose that $\left( b,v\right) $ is also a solution pair of the problem
(0.1)-(0.4). Then from the representation (2.2) and (2.4) of the solution,
we have

\begin{equation}
u(x,t)-v(x,t)=\sum\limits_{k=1}^{\infty }\varphi _{2k}\left( e^{-(2\pi
k)^{2}\int\limits_{0}^{t}a(s)ds}-e^{-(2\pi
k)^{2}\int\limits_{0}^{t}b(s)ds}\right) X_{2k}(x)
\end{equation}

\begin{equation*}
+\sum\limits_{k=1}^{\infty }\left( \int\limits_{0}^{t}F_{2k}(\tau )\left(
e^{-(2\pi k)^{2}\int\limits_{\tau }^{t}a(s)ds}-e^{-(2\pi
k)^{2}\int\limits_{\tau }^{t}b(s)ds}\right) d\tau \right) X_{2k}(x)
\end{equation*}

\begin{equation*}
+\sum\limits_{k=1}^{\infty }\left( \varphi _{2k-1}-4\pi k\varphi
_{2k}t\right) \left( e^{-(2\pi k)^{2}\int\limits_{0}^{t}a(s)ds}-e^{-(2\pi
k)^{2}\int\limits_{0}^{t}b(s)ds}\right) X_{2k-1}(x)
\end{equation*}

\begin{equation*}
+\sum\limits_{k=1}^{\infty }\left( \int\limits_{0}^{t}\left( F_{2k-1}(\tau
)-4\pi kF_{2k}(\tau )(t-\tau )\right) \left( e^{-(2\pi
k)^{2}\int\limits_{\tau }^{t}a(s)ds}-e^{-(2\pi k)^{2}\int\limits_{\tau
}^{t}b(s)ds}\right) d\tau \right) X_{2k-1}(x),
\end{equation*}

\begin{equation}
a(t)-b(t)=P\left[ a(t)\right] -P\left[ b(t)\right] ,
\end{equation}
where

\begin{eqnarray*}
P\left[ a(t)\right] -P\left[ b(t)\right] &=&\frac{2F_{0}(t)+\sum%
\limits_{k=1}^{\infty }\frac{2}{\pi k}F_{2k}(t)-E^{\prime }(t)}{%
\sum\limits_{k=1}^{\infty }8\pi k\left( \varphi _{2k}e^{-(2\pi
k)^{2}\int\limits_{0}^{t}a(s)ds}+\int\limits_{0}^{t}F_{2k}(\tau )e^{-(2\pi
k)^{2}\int\limits_{\tau }^{t}a(s)ds}d\tau \right) }- \\
&&\frac{2F_{0}(t)+\sum\limits_{k=1}^{\infty }\frac{2}{\pi k}%
F_{2k}(t)-E^{\prime }(t)}{\sum\limits_{k=1}^{\infty }8\pi k\left( \varphi
_{2k}e^{-(2\pi
k)^{2}\int\limits_{0}^{t}b(s)ds}+\int\limits_{0}^{t}F_{2k}(\tau )e^{-(2\pi
k)^{2}\int\limits_{\tau }^{t}b(s)ds}d\tau \right) }.
\end{eqnarray*}
The following estimate is true:

\begin{eqnarray*}
\left\vert P\left[ a(t)\right] -P\left[ b(t)\right] \right\vert &\leq &\frac{%
\left( 2F_{0}(t)+\sum\limits_{k=1}^{\infty }\frac{2}{\pi k}%
F_{2k}(t)+\left\vert E^{\prime }(t)\right\vert \right) }{C_{2}^{2}}. \\
&&\left( 
\begin{array}{c}
\sum\limits_{k=1}^{\infty }8\pi k\varphi _{2k}\left\vert e^{-(2\pi
k)^{2}\int\limits_{0}^{t}a(s)ds}-e^{-(2\pi
k)^{2}\int\limits_{0}^{t}b(s)ds}\right\vert \\ 
+\sum\limits_{k=1}^{\infty }8\pi k\int\limits_{0}^{T}F_{2k}(\tau )\left\vert
e^{-(2\pi k)^{2}\int\limits_{\tau }^{t}a(s)ds}-e^{-(2\pi
k)^{2}\int\limits_{\tau }^{t}b(s)ds}\right\vert d\tau%
\end{array}%
\right) .
\end{eqnarray*}%
Using the estimates

\begin{eqnarray*}
\left\vert e^{-(2\pi k)^{2}\int\limits_{0}^{t}a(s)ds}-e^{-(2\pi
k)^{2}\int\limits_{0}^{t}b(s)ds}\right\vert &\leq &(2\pi k)^{2}T\underset{%
0\leq t\leq T}{\max }\left\vert a(t)-b(t)\right\vert , \\
\left\vert e^{-(2\pi k)^{2}\int\limits_{\tau }^{t}a(s)ds}-e^{-(2\pi
k)^{2}\int\limits_{\tau }^{t}b(s)ds}\right\vert &\leq &(2\pi k)^{2}T\underset%
{0\leq t\leq T}{\max }\left\vert a(t)-b(t)\right\vert ,
\end{eqnarray*}%
we obtain

\begin{equation*}
\underset{0\leq t\leq T}{\max }\left\vert P\left[ a(t)\right] -P\left[ b(t)%
\right] \right\vert \leq \alpha \underset{0\leq t\leq T}{\max }\left\vert
a(t)-b(t)\right\vert .
\end{equation*}%
Let $\alpha \in (0,1)$ be arbitrary fixed number. Fix a number $T_{0},$ $%
0<T_{0}\leq T,$ such that

\begin{equation*}
\frac{C_{1}(C_{4}+C_{5})}{C_{2}^{2}}T_{0}\leq \alpha .
\end{equation*}%
Then from the equality (2.11) we obtain

\begin{equation*}
\left\Vert a-b\right\Vert _{C\left[ 0,T_{0}\right] }\leq \alpha \left\Vert
a-b\right\Vert _{C\left[ 0,T_{0}\right] },
\end{equation*}%
which implies that $a=b.$ By substituting $a=b$ in (2.10), we have $u=v$.
\end{proof}

Theorem has been proved.

\section{Continuous Dependence of $(a,u)$ upon the data}

\bigskip

\begin{theorem}
Under assumption $\left( A_{1}\right) -\left( A_{3}\right) ,$ the solution $%
(a,u)$ of the problem (0.1)-(0.3) depends continuously upon the data for
small T .
\end{theorem}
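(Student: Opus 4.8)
The plan is to run a perturbation argument around the fixed-point equation (2.4). Let $\{\varphi,E,F\}$ be the given data and $\{\overline\varphi,\overline E,\overline F\}$ a second data set, both satisfying $(A_1)$--$(A_3)$, with corresponding classical solutions $(a,u)$ and $(\overline a,\overline u)$ furnished by Theorem 1. I would equip the data with the norm
\[
\|\{\varphi,E,F\}-\{\overline\varphi,\overline E,\overline F\}\|=\|\varphi-\overline\varphi\|_{C^4[0,1]}+\|E-\overline E\|_{C^1[0,T]}+\max_{t\in[0,T]}\|F(\cdot,t)-\overline F(\cdot,t)\|_{C^4[0,1]},
\]
and first restrict the barred data to a fixed small ball around $\{\varphi,E,F\}$. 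The point of this restriction is that then the constants $C_0,\dots,C_6$ and their barred analogues stay within fixed bounds; in particular the denominators
\[
N(t)=\sum_{k=1}^{\infty}8\pi k\Bigl(\varphi_{2k}e^{-(2\pi k)^{2}\int_0^t a(s)ds}+\int_0^t F_{2k}(\tau)e^{-(2\pi k)^{2}\int_\tau^t a(s)ds}\,d\tau\Bigr)
\]
and $\overline N(t)$ are bounded below by one and the same positive constant, independent of the particular barred data in the ball. Securing this uniform two-sided control of all the structural constants is the technical heart of the proof.

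With that in hand I would estimate $a-\overline a$ on $[0,T_0]$, where $T_0$ is the small time from Theorem 1. Writing $a=P[a]$ and $\overline a=\overline P[\overline a]$ with $\overline P$ the operator (2.5) built from the barred data, I split
\[
\|a-\overline a\|_{C[0,T_0]}\le\|P[a]-\overline P[a]\|_{C[0,T_0]}+\|\overline P[a]-\overline P[\overline a]\|_{C[0,T_0]}.
\]
The second summand is $\le\alpha\|a-\overline a\|_{C[0,T_0]}$ with $\alpha<1$ once $T_0$ is small, by exactly the Lipschitz estimate used in the uniqueness part of Theorem 1 (that estimate used only that both arguments lie in $M$ and that $T_0$ is small). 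The first summand is a difference of two quotients of the same structural form but with different data entries $F_0$, $F_{2k}$, $\varphi_{2k}$, $E'$; estimating the numerator difference and the denominator difference separately and invoking the uniform lower bound on denominators yields $\|P[a]-\overline P[a]\|_{C[0,T_0]}\le C\,\|\{\varphi,E,F\}-\{\overline\varphi,\overline E,\overline F\}\|$, with the series tails dominated as in $(A_2)$--$(A_3)$. Absorbing the contraction term gives $(1-\alpha)\|a-\overline a\|_{C[0,T_0]}\le C\,\|\{\varphi,E,F\}-\{\overline\varphi,\overline E,\overline F\}\|$, i.e.\ continuous dependence of the coefficient.

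Finally I would substitute back into the series representation (2.2). The difference $u-\overline u$ has the four-series shape already displayed in (2.10), augmented by terms proportional to $\varphi_k-\overline\varphi_k$ and $F_k-\overline F_k$; each exponential difference is controlled by $|e^{-(2\pi k)^{2}\int_\tau^t a(s)ds}-e^{-(2\pi k)^{2}\int_\tau^t \overline a(s)ds}|\le(2\pi k)^{2}T\,\|a-\overline a\|_{C[0,T_0]}$ as in Theorem 1, and the $k$-series converge uniformly on $\overline Q_{T_0}$ by $(A_2)$--$(A_3)$. Hence $\|u-\overline u\|_{C(\overline Q_{T_0})}\le C\bigl(\|a-\overline a\|_{C[0,T_0]}+\|\{\varphi,E,F\}-\{\overline\varphi,\overline E,\overline F\}\|\bigr)$, which together with the previous paragraph completes the argument. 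As indicated, the only genuine obstacle is the uniform estimation of the constants---above all the lower bound for $N$ and $\overline N$---over a neighborhood of the data; the rest is the quotient- and series-estimation already carried out for Theorem 1, together with the smallness of $T$ needed to close the contraction.
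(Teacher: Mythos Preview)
Your argument is correct and follows the same overall strategy as the paper: both derive an inequality of the form $(1-\beta)\|a-\overline a\|\le C\|\Phi-\overline\Phi\|$ from the fixed-point equation (2.4), use smallness of $T$ to make $\beta<1$, and then feed the resulting estimate for $a-\overline a$ into the series (2.2) to control $u-\overline u$. The organizational difference is that you split $a-\overline a=P[a]-\overline P[a]+\overline P[a]-\overline P[\overline a]$ and invoke the Lipschitz estimate from the uniqueness part of Theorem~1 directly for the second piece, whereas the paper expands the full quotient difference into six separate cross-product estimates (pairing each numerator factor with each denominator factor) before collecting the term proportional to $\|a-\overline a\|$ on the left. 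Your decomposition is cleaner and reuses the uniqueness machinery wholesale; the paper's direct expansion is more computational but arrives at the same structural inequality. Your restriction of the barred data to a small ball around $\{\varphi,E,F\}$ plays exactly the same role as the paper's uniform a priori bounds $M_1,M_2,M_3$ on both data sets---namely, to keep all the structural constants, and in particular the lower bound on the denominators $N(t)$ and $\overline N(t)$, under uniform control.
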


\begin{proof}
Let $\ \Phi =\left\{ \varphi ,\text{ }F,\text{ }E\right\} $ and $\overline{%
\Phi }=\left\{ \overline{\varphi },\text{ }\overline{F,}\text{ }\overline{E}%
\right\} $ be two sets of the data, which satisfy the conditions $\left(
A_{1}\right) -\left( A_{3}\right) .$\ Let us denote $\left\Vert \Phi
\right\Vert =(\left\Vert \varphi \right\Vert _{C^{4}\left[ 0,1\right]
}+\left\Vert F\right\Vert _{C^{4,0}(\overline{Q}_{T})}+\left\Vert
E\right\Vert _{C^{1}\left[ 0,T\right] }).$ Suppose that there exist positive
constants $M_{i},$ $i=1,$ $2,$ $3$ such that

\begin{equation*}
\left\Vert \varphi \right\Vert _{C^{4}\left[ 0,1\right] }\leq M_{1},\text{ }%
\left\Vert F\right\Vert _{C^{4,0}(\overline{Q_{T}})}\leq M_{2},\text{ }%
\left\Vert E\right\Vert _{C^{1}\left[ 0,T\right] }\leq M_{3},
\end{equation*}

\begin{equation*}
\left\Vert \overline{\varphi }\right\Vert _{C^{4}\left[ 0,1\right] }\leq
M_{1},\text{ }\left\Vert \overline{F}\right\Vert _{C^{4,0}(\overline{Q_{T}}%
)}\leq M_{2},\text{ }\left\Vert \overline{E}\right\Vert _{C^{1}\left[ 0,T%
\right] }\leq M_{3}.
\end{equation*}

Let $\left( a,u\right) $ and $\left( \overline{a},\overline{u}\right) $ be
solutions of the inverse problem (0.1)-(0.4) corresponding to the data $\Phi 
$ and $\overline{\Phi },$ respectively. According to (2.4),

\begin{equation*}
a(t)=\frac{2F_{0}(t)+\sum\limits_{k=1}^{\infty }\frac{2}{\pi k}%
F_{2k}(t)-E^{\prime }(t)}{\sum\limits_{k=1}^{\infty }8\pi k\left( \varphi
_{2k}e^{-(2\pi
k)^{2}\int\limits_{0}^{t}a(s)ds}+\int\limits_{0}^{t}F_{2k}(\tau )e^{-(2\pi
k)^{2}\int\limits_{\tau }^{t}a(s)ds}d\tau \right) },
\end{equation*}

\begin{equation*}
\overline{a}(t)=\frac{2\overline{F}_{0}(t)+\sum\limits_{k=1}^{\infty }\frac{2%
}{\pi k}\overline{F}_{2k}(t)-\overline{E}^{\prime }(t)}{\sum\limits_{k=1}^{%
\infty }8\pi k\left( \overline{\varphi }_{2k}e^{-(2\pi
k)^{2}\int\limits_{0}^{t}\overline{a}(s)ds}+\int\limits_{0}^{t}\overline{F}%
_{2k}(\tau )e^{-(2\pi k)^{2}\int\limits_{\tau }^{t}\overline{a}(s)ds}d\tau
\right) }.
\end{equation*}%
First let us estimate the difference $a-\overline{a}.$ It is easy to compute
that

\begin{eqnarray*}
&&\left\vert F_{0}(t)\sum\limits_{k=1}^{\infty }8\pi k\overline{\varphi }%
_{2k}e^{-(2\pi k)^{2}\int\limits_{0}^{t}\overline{a}(s)ds}-\overline{F}%
_{0}(t)\sum\limits_{k=1}^{\infty }8\pi k\varphi _{2k}e^{-(2\pi
k)^{2}\int\limits_{0}^{t}a(s)ds}\right\vert \\
&\leq &M_{4}\left\Vert a-\overline{a}\right\Vert _{_{C\left[ 0,T\right]
}}+M_{5}\left\Vert \varphi -\overline{\varphi }\right\Vert _{_{C^{4}\left[
0,1\right] }}+M_{6}\left\Vert F-\overline{F}\right\Vert _{_{C^{4,0}\left( 
\overline{Q}_{T}\right) }},
\end{eqnarray*}

\begin{eqnarray*}
&&\left\vert F_{0}(t)\sum\limits_{k=1}^{\infty }8\pi k\int\limits_{0}^{t}%
\overline{F}_{2k}(\tau )e^{-(2\pi k)^{2}\int\limits_{\tau }^{t}\overline{a}%
(s)ds}d\tau -\overline{F}_{0}(t)\sum\limits_{k=1}^{\infty }8\pi
k\int\limits_{0}^{t}F_{2k}(\tau )e^{-(2\pi k)^{2}\int\limits_{\tau
}^{t}a(s)ds}d\tau \right\vert \\
&\leq &M_{7}\left\Vert a-\overline{a}\right\Vert _{_{C\left[ 0,T\right]
}}+2TM_{5}\left\Vert F-\overline{F}\right\Vert _{_{C^{4,0}\left( \overline{Q}%
_{T}\right) }},
\end{eqnarray*}

\begin{eqnarray*}
&&\left\vert \sum\limits_{k=1}^{\infty }\frac{2}{\pi k}F_{2k}(t)\sum%
\limits_{k=1}^{\infty }8\pi k\overline{\varphi }_{2k}e^{-(2\pi
k)^{2}\int\limits_{0}^{t}\overline{a}(s)ds}-\sum\limits_{k=1}^{\infty }\frac{%
2}{\pi k}\overline{F}_{2k}(t)\sum\limits_{k=1}^{\infty }8\pi k\varphi
_{2k}e^{-(2\pi k)^{2}\int\limits_{0}^{t}a(s)ds}\right\vert \\
&\leq &\frac{M_{4}}{6}\left\Vert a-\overline{a}\right\Vert _{_{C\left[ 0,T%
\right] }}+\frac{M_{5}}{6}\left\Vert \varphi -\overline{\varphi }\right\Vert
_{_{C^{4}\left[ 0,1\right] }}+\frac{M_{6}}{6}\left\Vert F-\overline{F}%
\right\Vert _{_{C^{4,0}\left( \overline{Q}_{T}\right) }},
\end{eqnarray*}

\begin{eqnarray*}
&&%
\begin{array}{c}
\left\vert \sum\limits_{k=1}^{\infty }\frac{2}{\pi k}F_{2k}(t)\sum%
\limits_{k=1}^{\infty }8\pi k\int\limits_{0}^{t}\overline{F}_{2k}(\tau
)e^{-(2\pi k)^{2}\int\limits_{\tau }^{t}\overline{a}(s)ds}d\tau \right. \\ 
\left. -\sum\limits_{k=1}^{\infty }\frac{2}{\pi k}\overline{F}%
_{2k}(t)\sum\limits_{k=1}^{\infty }8\pi k\int\limits_{0}^{t}F_{2k}(\tau
)e^{-(2\pi k)^{2}\int\limits_{\tau }^{t}a(s)ds}d\tau \right\vert%
\end{array}
\\
&\leq &\frac{T^{2}M_{7}}{6}\left\Vert a-\overline{a}\right\Vert _{_{C\left[
0,T\right] }}+\frac{TM_{5}}{3}\left\Vert F-\overline{F}\right\Vert
_{_{C^{4,0}\left( \overline{Q}_{T}\right) }},
\end{eqnarray*}

\begin{eqnarray*}
&&\left\vert E^{\prime }(t)\sum\limits_{k=1}^{\infty }8\pi k\overline{%
\varphi }_{2k}e^{-(2\pi k)^{2}\int\limits_{0}^{t}\overline{a}(s)ds}-%
\overline{E}^{\prime }(t)\sum\limits_{k=1}^{\infty }8\pi k\varphi
_{2k}e^{-(2\pi k)^{2}\int\limits_{0}^{t}a(s)ds}\right\vert \\
&\leq &M_{8}\left\Vert a-\overline{a}\right\Vert _{_{C\left[ 0,T\right]
}}+M_{9}\left\Vert \varphi -\overline{\varphi }\right\Vert _{_{C^{4}\left[
0,1\right] }}+M_{6}\left\Vert E-\overline{E}\right\Vert _{_{C^{1}\left[ 0,T%
\right] }},
\end{eqnarray*}

\begin{eqnarray*}
&&\left\vert E^{\prime }(t)\sum\limits_{k=1}^{\infty }8\pi
k\int\limits_{0}^{t}\overline{F}_{2k}(\tau )e^{-(2\pi
k)^{2}\int\limits_{\tau }^{t}\overline{a}(s)ds}d\tau -\overline{E}^{\prime
}(t)\sum\limits_{k=1}^{\infty }8\pi k\int\limits_{0}^{t}F_{2k}(\tau
)e^{-(2\pi k)^{2}\int\limits_{\tau }^{t}a(s)ds}d\tau \right\vert \\
&\leq &M_{10}\left\Vert a-\overline{a}\right\Vert _{_{C\left[ 0,T\right]
}}+TM_{9}\left\Vert F-\overline{F}\right\Vert _{_{C^{4,0}\left( \overline{Q}%
_{T}\right) }}+TM_{5}\left\Vert E-\overline{E}\right\Vert _{_{C^{1}\left[ 0,T%
\right] }},
\end{eqnarray*}

where $M_{k},$ $k=4,...,12,$ are some constants.

If we consider these estimates in $a-\overline{a},$ we obtain

\begin{equation*}
(1-M_{11})\left\Vert a-\overline{a}\right\Vert _{_{C\left[ 0,T\right] }}\leq
M_{12}\left( \left\Vert E-\overline{E}\right\Vert _{_{C^{1}\left[ 0,T\right]
}}+\left\Vert \varphi -\overline{\varphi }\right\Vert _{_{C^{4}\left[ 0,1%
\right] }}+\left\Vert F-\overline{F}\right\Vert _{_{C^{4,0}\left( \overline{Q%
}_{T}\right) }}\right) .
\end{equation*}%
The inequality $M_{11}<1$ holds for small $T$. Finally, we obtain

\begin{equation*}
\left\Vert a-\overline{a}\right\Vert _{_{C\left[ 0,T\right] }}\leq
M_{13}\left\Vert \Phi -\overline{\Phi }\right\Vert ,\text{ }M_{13}=\frac{%
M_{12}}{(1-M_{11})}.
\end{equation*}
From (2.2), the similar estimate is also obtained for the difference $u-%
\overline{u}$ as

\begin{equation*}
\left\Vert u-\overline{u}\right\Vert _{C\left( \overline{Q}_{T}\right) }\leq
M_{14}\left\Vert \Phi -\overline{\Phi }\right\Vert .
\end{equation*}
\end{proof}

\section{Numerical method and an example}

\qquad We consider an example of numerical solution of the inverse problem
(0.1)-(0.4). We use the finite difference method with a predictor-corrector
type approach, that is suggested in [2]. Apply this method to the problem
(0.1)-(0.4).

We subdivide the intervals $\left[ 0,1\right] $ and $\left[ 0,T\right] $
into M and N subintervals of equal lengths $h=\frac{1}{M}$ and $\tau =\frac{T%
}{N}$ respectively. Then we add a line $x=\left( M+1\right) h$ to generate
the fictitious point needed for dealing with the second boundary condition.
We choose the Crank-Nicolson scheme, which is absolutely stable and has a
second order accuracy in both $h$ and $\tau .$ ([9]) The Crank-Nicolson
scheme for (0.1)-(0.4) is as follows:

\begin{eqnarray}
\frac{1}{\tau }\left( u_{i}^{j+1}-u_{i}^{j}\right) &=&\frac{1}{2}\left(
a^{j+1}+a^{j}\right) \frac{1}{2h^{2}}\left[ \left(
u_{i-1}^{j}-2u_{i}^{j}+u_{i+1}^{j}\right) +\left(
u_{i-1}^{j+1}-2u_{i}^{j+1}+u_{i+1}^{j+1}\right) \right]  \notag \\
&&+\frac{1}{2}\left( F_{i}^{j+1}+F_{i}^{j}\right) ,
\end{eqnarray}

\begin{equation}
u_{i}^{0}=\phi _{i},
\end{equation}

\begin{equation}
u_{0}^{j}=u_{M}^{j},
\end{equation}

\begin{equation}
u_{M-1}^{j}=u_{M+1}^{j},
\end{equation}%
where $1\leq i\leq M$ and $0\leq j\leq N$ are the indices for the spatial
and time steps respectively, $u_{i}^{j}=u(x_{i},t_{j}),$ $\phi _{i}=\varphi
(x_{i}),$ $F_{i}^{j}=F(x_{i},t_{j}),$ $x_{i}=ih,$ $t_{j}=j\tau .$ At the $%
t=0 $ level, adjustment should be made according to the initial condition
and the compatability requirements.

(4.1)-(4.4) problem forms $M\times M$ linear system of equations

\begin{equation}
AU^{j+1}=b,
\end{equation}%
where

$U^{j}=\left( u_{1}^{j},u_{2}^{j},...,u_{M}^{j}\right) ^{T},$ $1\leq j\leq
N, $ $b=(b_{1},b_{2},...,b_{M})^{T},$

\bigskip

$A=%
\begin{bmatrix}
-2\left( 1+R\right) & 1 & 0 & ... &  &  & 0 & 1 \\ 
1 & -2\left( 1+R\right) & 1 & 0 & ... &  &  & 0 \\ 
0 & 1 & -2\left( 1+R\right) & 1 & 0 & ... &  & 0 \\ 
\vdots &  &  & \ddots &  &  &  &  \\ 
&  &  &  &  &  &  &  \\ 
&  &  &  &  &  &  &  \\ 
&  &  &  & 0 & 1 & -2\left( 1+R\right) & 1 \\ 
&  &  &  &  & 0 & 2 & -2\left( 1+R\right)%
\end{bmatrix}%
,$

$R=\frac{2h^{2}}{\tau (a^{j+1}+a^{j})},$ $j=0,1,...,N,$

$b_{1}=2(1-R)u_{1}^{j}-u_{2}^{j}-u_{M}^{j}-R\tau (F_{1}^{j+1}+F_{1}^{j}),$ $%
j=0,1,...,N,$

$b_{M}=-2u_{M-1}^{j}+2(1-R)u_{M}^{j}-R\tau (F_{M}^{j+1}+F_{M}^{j}),$ $%
j=0,1,...,N,$

$b_{i}=-u_{i-1}^{j}+2(1-R)u_{i}^{j}-u_{i+1}^{j}-R\tau
(F_{i}^{j+1}+F_{i}^{j}),$ $i=2,3,...,M-1,$ $j=0,1,...,N.$

Now, let us construct the predicting-correcting mechanism. First,
integrating the equation (0.1) respect to $x$ from $0$ to $1$ and using
(0.3),(0.4), we obtain

\begin{equation}
a(t)=\frac{-E^{\prime }(t)+\int_{0}^{1}F(x,t)dx}{u_{x}(0,t)}.
\end{equation}
The finite difference approximation of (4.6) is

\begin{equation*}
a^{j}=\frac{\left( -\left( Et\right) ^{j}+(Fin)^{j}\right) h}{%
u_{1}^{j}-u_{0}^{j}},
\end{equation*}%
where $\left( Et\right) ^{j}=E^{\prime }(t_{j}),$ $(Fin)^{j}=%
\int_{0}^{1}F(x,t_{j})dx,$ $j=0,1,...,N.$ For $j=0,$

\begin{equation*}
a^{0}=\frac{\left( -\left( Et\right) ^{0}+(Fin)^{0}\right) h}{\phi _{1}-\phi
_{0}},
\end{equation*}%
and the values of $\phi _{i}$ provide us to start our computation. We denote
the values of $a^{j},$ $u_{i}^{j}$ at the s-th iteration step $a^{j(s)},$ $%
u_{i}^{j(s)},$ respectively. In numerical computation, since the time step
is very small, we can take $a^{j+1(0)}=a^{j},$ $u_{i}^{j+1(0)}=u_{i}^{j},$ $%
j=0,1,2,....N,$ $i=1,2,...,M$. At each $(s+1)$-th iteration step we first
determine $a^{j+1(s+1)}$ from the formula

\begin{equation*}
a^{j+1(s+1)}=\frac{\left( -\left( Et\right) ^{j+1}+(Fin)^{j+1}\right) h}{%
u_{1}^{j+1(s)}-u_{0}^{j+1(s)}}.
\end{equation*}%
Then from (4.1)-(4.3) we obtain

\begin{eqnarray}
\frac{1}{\tau }\left( u_{i}^{j+1(s+1)}-u_{i}^{j+1(s)}\right) &=&\frac{1}{%
4h^{2}}\left( a^{j+1(s+1)}+a^{j+1(s)}\right) \left[ \left(
u_{i-1}^{j+1(s)}-2u_{i}^{j+1(s)}+u_{i+1}^{j+1(s)}\right) \right.  \notag \\
&&\left. +\left(
u_{i-1}^{j+1(s+1)}-2u_{i}^{j+1(s+1)}+u_{i+1}^{j+1(s+1)}\right) \right] +%
\frac{1}{2}\left( F_{i}^{j+1}+F_{i}^{j}\right) ,
\end{eqnarray}

\begin{equation}
u_{0}^{j+1(s)}=u_{M}^{j+1(s)},
\end{equation}

\begin{equation}
u_{M-1}^{j+1(s)}=u_{M+1}^{j+1(s)},\text{ }s=0,1,2,...\text{ .}
\end{equation}%
The problem (4.7)-(4.9) can be solved by the Gauss elimination method and $%
u_{i}^{j+1(s+1)}$ is determined.\ If the difference of values on two
iteration reaches the prescribed tolerence, the iteration is stopped and we
accept the corresponding values $a^{j+1(s+1)},$ $%
u_{i}^{j+1(s+1)}(i=1,2,...,M)$ as $a^{j+1},$ $u_{i}^{j+1}$($i=1,2,...,M),$
on the ($j+1)$-th time step, respectively. In virtue of this iteration, we
can move from level $j$ to level $j+1.$

\bigskip

\textbf{Example. }Consider the inverse problem (0.1)-(0.4), with

\begin{eqnarray*}
F(x,t) &=&\left( \frac{1}{\pi }\exp (-t)+4\pi \exp (3t)\right) \cos 2\pi
x+(2\pi )^{2}(1-x)\sin 2\pi x\exp (3t), \\
\varphi (x) &=&(1-x)\sin 2\pi x,\text{ \ \ }E(t)=\frac{1}{2\pi }\exp (-t),%
\text{ \ }T=\frac{1}{4}.
\end{eqnarray*}%
It is easy to check that the exact solution is%
\begin{equation*}
\left\{ a(t),\text{ }u(x,t)\right\} =\left\{ \frac{1}{\left( 2\pi \right)
^{2}}+\exp \left( 4t\right) ,\text{ }(1-x)\sin 2\pi x\exp (-t)\right\} .
\end{equation*}

We use the Crank-Nicolson scheme and the iteration which are explained
above. In result, we obtain Table 1 and Table 2 for exact and approximate
values of $a(t)$ and $u(x,t)$. The step sizes are $h=0.005$ and $\tau =\frac{%
h}{4}.$

\newpage

Table 1. The some values of $a(t)$

\begin{tabular}{llll}
Exact & Approximate & Error & Relative Error \\ 
1.0354 & 1.0261 & 0.0093 & 0.009 \\ 
1.3685 & 1.3276 & 0.0409 & 0.0299 \\ 
1.6658 & 1.6308 & 0.035 & 0.021 \\ 
1.9698 & 1.9487 & 0.0211 & 0.0107 \\ 
2.0798 & 2.0644 & 0.0154 & 0.0074 \\ 
2.2068 & 2.2008 & 0.006 & 0.0027 \\ 
2.3186 & 2.322 & 0.0034 & 0.0014 \\ 
2.4727 & 2.4915 & 0.0188 & 0.0076 \\ 
2.5981 & 2.6304 & 0.0323 & 0.0124 \\ 
2.7301 & 2.7778 & 0.0477 & 0.0175%
\end{tabular}

\bigskip

\newpage

\bigskip

\bigskip

Table 2. The some values of $u(x,t)$ for $T=70$

\begin{tabular}{llll}
Exact & Approximate & Error & Relative Error \\ 
0.3633 & 0.3486 & 0.0147 & 0.0347 \\ 
0.5952 & 0.5933 & 0.0019 & 0.0044 \\ 
0.6506 & 0.6523 & 0.0017 & 0.0001 \\ 
0.4054 & 0.4084 & 0.003 & 0.0017 \\ 
0.3402 & 0.3415 & 0.0013 & 0.0016 \\ 
0.2563 & 0.2554 & 0.0009 & 0.0085 \\ 
-0.2157 & -0.2298 & 0.0141 & 0.0457 \\ 
-0.1430 & -0.1517 & 0.0087 & 0.0066 \\ 
-0.1176 & -0.1252 & 0.0076 & 0.0019 \\ 
-0.0754 & -0.0812 & 0.0059 & 0.0276%
\end{tabular}

\bigskip

\bigskip

\bigskip

\bigskip

\bigskip

\bigskip

\bigskip

\end{document}